\newcommand{\ds}{\displaystyle}
\newcommand{\ZZ}{\mathbf{Z}}
\newcommand{\CALA}{\mathcal{A}}
\newcommand{\CALC}{\mathcal{C}}
\newcommand{\abgrp}{\mathcal{AG}}
\newcommand{\cok}{\operatorname{coker}}
\newtheorem{proposition}{Proposition}
\newtheorem{corollary}{Corollary}
\theoremstyle{definition}
\newtheorem{definition}{Definition}
\begin{document}

\title{K-theory of $\lambda$-rings and tensorlike functors.}
\author{F.J.-B.J.~Clauwens}

\maketitle

\begin{abstract}
We refine the invariant on
$K_2(A[C_{p_e}]/I_m,(T-1))$
constructed in a previous paper
to one which is an isomorphism for
all $\lambda$-rings $A$.
\end{abstract}

\section{Introduction}
This paper is part of a long term project aimed at the computation
of $K_1(A[G])$ for $G$ a finite abelian group and $A$ any $\lambda$-ring,
for example a polynomial ring over the integers.
To achieve this aim some $K_2$ groups have to be calculated.
In \cite{fcal} an invariant was constructed on
$K_2(A\otimes B,A\otimes I_B)$ for $B=\ZZ[G]/I_m$.
Here $G$ is a cyclic group of order $q=p^e$ and 
the $I_m$ form a system of $\lambda$-ideals 
which is cofinal with the powers of the augmentation ideal $I_B$.
This invariant map had values in an abelian group
which is an explicit expression in $A$ and $\Omega_A$,
the module of K\"ahler differentials for $A$.
It was proved that this invariant is an isomorphism
as long as $\Omega_A$ has no $p$-torsion.
Unfortunately this excludes the case that $A$ is 
itself a group ring of a $p$-cyclic group.
It is the aim of the present paper to refine this invariant slightly
so that it becomes an isomorphism for all $A$.

The computation of $K_2(A[x]/x^n,(x))$
reveals that it can be expressed as a functor $F$
of the homomorphism $\delta\colon A\to\Omega_A$ of abelian groups,
at least for rings $A$ with the structure of $\lambda$-ring.
The computation is done using a logarithm-like
map $L$, see \cite{flog} and \cite{fexp}.

However for $B$ as above even the best choice of logarithm
is not an isomorphism, 
but has a certain small kernel and cokernel.
In \cite{fcal} it is tried to remedy this situation
by combining with a second logarithmic invariant,
which detects elements which are in the kernel
of the original one.
The result is a map to a fibered product of abelian groups,
which is an isomorphism if $\Omega_A$ has no $p$-torsion.
However the resulting functor $F$ has not the nice
`tensorlike'  properties that the original one had.

The idea of this paper is to construct a new functor
enforcing this nice property,
by taking the fibered product not on the level of abelian groups,
but on the level of the functors producing these groups.
Finally we show that this new invariant
is  an isomorphism for all $A$.

\section{Tensorlike functors}

Write $\abgrp$ for the category of abelian groups and homomorphisms.
Let $S$ be a ring,
and write $\CALC_S$ for the category of left $S$-modules.
A right $S$-module $K$
yields a functor $F_K\colon \CALC_S\to\abgrp$
by the formula $F_K(M)=K\otimes_S M$.

We  call a functor $F$ from left $R$-modules to abelian groups
\begin{itemize}
\item
Weakly additive if it commutes with finite direct sums.
This we always assume.
\item
Additive if it is weakly additive and commutes with direct limits.
\item
Weakly tensorlike if it is weakly additive and commutes with right exact sequences.
\item
Tensorlike if it is additive and commutes  with right exact sequences.
This is the case if it commutes with arbitrary colimits,
for example if it has a right adjoint.
\end{itemize}

\begin{proposition}
\label{cover}
Every weakly additive functor $F$ has a natural tensorlike cover.
If it is already weakly tensorlike it is an isomorphism for 
finitely presented modules.
If it is already tensorlike then it is an isomorphism 
for all modules.
\end{proposition}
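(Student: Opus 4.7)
The plan is to take $\tilde F(M) := F(S)\otimes_S M$, where the right $S$-module structure on $F(S)$ is obtained by applying $F$ to right multiplications. Concretely, for $s\in S$ the right-multiplication map $\rho_s\colon S\to S$, $r\mapsto rs$, is left $S$-linear, so $F(\rho_s)\colon F(S)\to F(S)$ is defined, and the relations $\rho_1=\mathrm{id}$ and $\rho_{st}=\rho_t\circ\rho_s$ give a right action of $S$ on $F(S)$. Since $\tilde F$ is a tensor product functor it is automatically tensorlike.

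Next I would build a natural comparison $\tau\colon\tilde F\to F$. For each $m\in M$ the left-multiplication map $\lambda_m\colon S\to M$, $s\mapsto sm$, is left $S$-linear, so $F(\lambda_m)\colon F(S)\to F(M)$ is defined. The assignment $(x,m)\mapsto F(\lambda_m)(x)$ is additive in $m$ by weak additivity and is $S$-balanced because $\lambda_{sm}=\lambda_m\circ\rho_s$ implies $F(\lambda_{sm})(x)=F(\lambda_m)(xs)$. It therefore descends to $\tau_M\colon F(S)\otimes_S M\to F(M)$, and naturality in $M$ is routine. By construction $\tau_S$ is the canonical identification of $F(S)\otimes_S S$ with $F(S)$, and weak additivity extends this to $\tau_{S^n}$ being an isomorphism for every finite free module.

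If $F$ is weakly tensorlike and $M$ is finitely presented, pick a presentation $S^m\to S^n\to M\to 0$. Both $F$ and $\tilde F$ turn this into a right exact sequence, and in the resulting ladder the first two vertical maps are isomorphisms by the previous paragraph, so comparing cokernels gives that $\tau_M$ is an isomorphism. If $F$ is tensorlike, I would then write an arbitrary $M$ as a filtered colimit of finitely presented modules; since both $F$ and $\tilde F$ commute with filtered colimits and $\tau$ is already an isomorphism on every finitely presented stage, $\tau_M$ is an isomorphism as well. The only step requiring care is the balancing check for $\tau$, which rests on the identity $\lambda_{sm}=\lambda_m\circ\rho_s$; everything else is formal manipulation with right exactness and colimits.
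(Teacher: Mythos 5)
Your proposal is correct and follows essentially the same route as the paper: the right $S$-module structure on $F(S)$ via the functor applied to (right) multiplication maps, the comparison map $F(S)\otimes_S M\to F(M)$ induced by $s\mapsto sm$, isomorphism on finite free modules by weak additivity, then finitely presented modules via right exactness and general modules via filtered colimits. No substantive differences.
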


\begin{proof}
Let $F\colon\CALC_S\to\abgrp$ be weakly additive.
For $M$ a left $S$-module and $a\in M$ there is an $S$-module
homomorphism $\rho_a$ given by $\rho_a(s)=sa$.
This induces a map $F(\rho_a)\colon F(S)\to F(M)$.
By varying $a$ we get a pairing $F(S)\times M\to F(M)$.
From the weak additivity it follows that this map is additive
also in the second entry.
Therefore we get a map $\mu\colon F(S)\otimes M\to F(M)$.

For $M=R$ this yields a right $S$ module structure on $F(S)$.
So we get a map $\overline{\mu}\colon F(S)\otimes_S M\to F(M)$.
This map is an isomorphism for $M=S$, with inverse $x\mapsto x\otimes 1$.
By weak additivity it is therefore an isomorphism for $M=S^m$.
If  $F$ respects right exact sequences then $\overline{\mu}$
is an isomorphism for all finitely presented modules $M$.
If $F$ is even right exact then $\overline{\mu}$ is an isomorphism for all $M$.
\end{proof}

In our applications we have a category $\CALC$ which is not given as a category 
of left $S$-modules, but as a category of diagrams of  abelian groups,
as discussed in the next section \ref{ar}.
However by the Freyd-Mitchell theorem one can realize  this category
as a category of left $S$-modules  for some $S$.
We make this correspondence explicit in proposition \ref{fm}.

Each logarithmic map is a tensorlike functor of the diagram of abelian
groups consisting of $\delta\colon A\to\Omega_A$.
The combination of the logarithms is a map to a pullback
which is no longer a tensorlike functor.
It has however a tensorlike cover, which is constructed in section \ref{cov}.
We then lift our system of logarithms to this cover
by considering the case where $A$ is the universal $\lambda$-ring.
This is done in section \ref{inv}.

\section{A category of arrows.\label{ar}}

Let $R$ be a ring and $\tau\colon R\to R$ an endomorphism.
\begin{definition}
Given a left $R$-module $N$ the $R$-module $N^\tau$
is defined  to be the one with the same additive structure as $N$, 
but with scalar multiplication given by $r\cdot_\tau n=\tau(r)\cdot n$.
\end{definition}

If $M$ is a left $R$-module then a module homomorphism $D\colon M\to N^\tau$
is the same as an additive homomorphism $D\colon M\to N$ such that
$D(r\cdot m)=\tau(r)\cdot D(m)$,
which we call a $\tau$-crossed homomorphism.
Note that a module homomorphism $\beta\colon N_1\to N_2$
induces a module homomorphism $\beta^\tau\colon N_1^\tau\to\ N_2^\tau$.

\begin{definition}
Given $\tau\colon R\to R$ 
the category $\CALA(R,\tau)$ is the one 
with as objects the homomorphisms $D\colon M\to N^\tau$
and as morphisms the commutative diagrams
\begin{equation*}
\xymatrix{
M_1\ar[r]^\alpha\ar[d]^{D_1}&M_2\ar[d]^{D_2}\\
N_1^\tau\ar[r]^{\beta^\tau}&N_2^\tau\\
}
\end{equation*}
where $\alpha\colon M_1\to M_2$ and $\beta\colon N_1\to N_2$ are $R$-module homomorphisms.
\end{definition}

\begin{definition}
Given a ring $R$ and an endomorphism $\tau$
we construct a new ring $S(R,\tau)$
by starting with the ring $R\langle u,v\rangle$
of noncommutative polynomials 
and dividing out by the relations
$uv=0$, $v^2=0$, $vu=v$, $u^2=u$, $ur=ru$ and $vr=\tau(r)v$.
\end{definition}

Given $R$ modules $M$ and $N$ and a homomorphism $D\colon M\to N^\tau$
we construct a left $S(R,\tau)$-module $P$ by
$P=M \oplus N$,
$u(m\oplus n)=m\oplus 0$,
$v(m \oplus n)=0 \oplus D(m)$,
$r(m \oplus n)=rm \oplus rn$.

\begin{proposition}
\label{fm}
This produces  an equivalence between the category $\CALA(R,\tau)$
and the category of left $S(R,\tau)$ modules.
\end{proposition}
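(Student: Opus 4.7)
The plan is to write $\Phi$ for the functor from $\CALA(R,\tau)$ to left $S(R,\tau)$-modules given on objects by the construction in the statement and on morphisms by $(\alpha,\beta)\mapsto\alpha\oplus\beta$, to produce an explicit quasi-inverse $\Psi$, and to exhibit natural isomorphisms $\Psi\Phi\cong\mathrm{id}$ and $\Phi\Psi\cong\mathrm{id}$.

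To construct $\Psi$, let $P$ be a left $S(R,\tau)$-module. The relations $u^2=u$ and $ur=ru$ make $u$ and $1-u$ orthogonal $R$-linear idempotents summing to $1$, so $P = uP\oplus(1-u)P$ as $R$-modules. Set $M = uP$, $N = (1-u)P$, and define $D\colon M\to N^\tau$ by $D(x)=vx$. The relation $uv=0$ ensures $vx\in (1-u)P = N$, while $vr=\tau(r)v$ gives $D(rx)=\tau(r)vx$, which is exactly the condition that $D$ be $R$-linear into $N^\tau$. An $S$-linear map $\phi\colon P_1\to P_2$ restricts to $R$-module maps $\alpha=\phi|_{uP_1}$ and $\beta=\phi|_{(1-u)P_1}$, and $\phi v=v\phi$ gives $\beta D_1 = D_2\alpha$, so $(\alpha,\beta)$ is a morphism in $\CALA(R,\tau)$; this defines $\Psi$ on morphisms.

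Next I would verify that $\Phi$ is well-defined by checking that the prescribed actions on $M\oplus N$ satisfy the six relations of $S(R,\tau)$; each is a one-line computation, with the relation $vr=\tau(r)v$ corresponding exactly to the $\tau$-crossed condition $D(rm)=\tau(r)D(m)$. The natural isomorphism $\Psi\Phi\cong\mathrm{id}$ is then transparent: $u$ acting on $M\oplus N$ has image $M\oplus 0$, $1-u$ has image $0\oplus N$, and $v(m\oplus 0) = 0\oplus D(m)$, so $\Psi\Phi(D)$ recovers $D$ exactly.

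For $\Phi\Psi\cong\mathrm{id}$ the natural candidate is the identification $P \xrightarrow{\sim} uP\oplus(1-u)P$ coming from the idempotent splitting; its $S$-linearity is checked generator by generator, using $vu=v$ (so $v$ vanishes on $(1-u)P$) and $uv=0$ (so $v$ maps $uP$ into $(1-u)P$). The main obstacle is bookkeeping rather than any conceptual hurdle: one must carefully track the twist $\tau$ and the noncommutativity in the defining relations of $S(R,\tau)$, but no new idea is required once one sees that the orthogonal idempotents $u$ and $1-u$ force every $S$-module to split into exactly the data of an object of $\CALA(R,\tau)$.
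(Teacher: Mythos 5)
Your proposal is correct and follows essentially the same route as the paper: the quasi-inverse is built from the idempotent splitting $P=uP\oplus(1-u)P$ with $D$ recovered as multiplication by $v$, exactly as in the paper's proof. You merely spell out more of the routine verifications (the six relations, naturality, and the two natural isomorphisms) that the paper leaves implicit.
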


\begin{proof}
If $\alpha\colon M_1\to M_2$ and $\beta\colon N_1\to N_2$
are $R$-homomorphisms such that $\beta\circ D=D\circ\alpha$ then we construct 
a $S(R,\tau)$-homomorphism from $P_1=M_1\oplus N_1$ to $P_2=M_2\oplus N_2$
by $\gamma(m\oplus n)=\alpha(m)\oplus\beta(n)$.

On the other hand if $P$ is a left $S(R,\tau)$ module
then we define $M=uP$,
$N=(1-u)P$,
$D(m)=v\cdot m$.
Then $M$ and $N$ are left $R$ modules
and $D$ is a homomorphism from $M$ to $N^\tau$.
Moreover a $S(R,\tau)$ homomorphism $\gamma\colon P_1\to P_2$
gives rise to $R$ homomorphisms $\alpha\colon uP_1\to uP_2$
and $\beta\colon (1-u)P_1\to(1-u)P_2$ such that $\beta\circ D=D\circ\alpha$.
\end{proof}

\bigbreak

We made the definitions with the following special case in mind.
Let $A$ be a $\lambda$-ring.
\begin{itemize}
\item
The abelian group $M=A$ given by addition,
with a $\ZZ[t]$ module structure fiven by $t\cdot a=\psi^p(a)$.
Here $\psi^p$ is the Adams operation.
\item
The abelian group $N=\Omega_A$ of K\"ahler differentials,
with a $\ZZ[t]$ module structure given by $t\cdot \alpha=\phi^p(\alpha)$,
Here $\phi^p$ is the operation introduced in \cite{flog}.
\item
The differential $\delta\colon A\to \Omega_A$.
Since $\delta\psi^p(a)=p\phi^p\delta(a)$ this is a crossed
homomorphism $D$ with respect to the endomorphism $\sigma$ of $\ZZ[t]$
given by $\sigma(t)=pt$.
\end{itemize}

The above construction
defines  a functor $\Delta$
from $\lambda$-rings to  to $\CALA(\ZZ[t],\sigma)$.
By the above theory we get a from this a functor from $\lambda$-rings to left $S(\ZZ[t],\sigma)$-modules.
\bigbreak

We now recall the main result of \cite{fcal}.
Consider the following diagram:
\begin{equation}
\label{maindia}
\xymatrix{
K_2(A\otimes B,A\otimes I)\ar[rr]^{L_4\circ p_{GM}}\ar[d]^{L_2}&&
K_{2,L}(A\otimes(B/I^2),A\otimes I)\ar[d]^{L_3\circ L_4^{-1}}\\
K_{2,L}(A\otimes B,A\otimes I)\ar[rr]^{p_{GL}}&&
K_{2,L}(A\otimes(B/I^2),A\otimes I)\\
}
\end{equation}
Here $K_2$ is relative algebraic $K$-theory,
and for any commutative ring $W$ and ideal $J$
the group $K_{2,L}(W,J)$ is defined as $\cok(\delta^*\colon J\otimes J\to J\otimes_W\Omega_W)$,
where  $\delta^*(a\otimes b)=a\otimes\delta b+b\otimes\delta a$.
For the definitions of the homomorphisms $L_2$, $L_3$, $L_4$, $p_{GL}$ and $p_{GM}$ we refer to \cite{fcal}.
This diagram has the following properties:
\begin{itemize}
\item
The map $p_{GL}$ is surjective.
\item
If an an element in the bottom left group and one in the upper right group
have the same image then they come both from an element in the $K_2$ group.
In other words the $K_2$ group maps surjectively to the fibre product of the $K_{2,L}$ groups.
\item
In the case that $\Omega_A$ has no $p$-torsion then this element is unique.
In other words the diagram is then cartesian.
\end{itemize}
The $K_{2,L}$ groups in this diagram are tensorlike functors of the diagram $\Delta(A)$
and thus of the left $S(\ZZ[t],\sigma)$ module associated to $A$.
However their fibre product is not a tensorlike functor.
The aim of this paper is to construct its tensorlike cover, 
lift our invariants to this new group,
and show that  the new invariant is an isomorphism
for all $A$, without condition on $\Omega_A$.
Thus the kernel of the old combination of invariants is `explained'
by the fact that the tensorlike cover maps not always injectively to the 
fibre product of $K_{2,L}$ groups.

\bigbreak
We now exhibit the tensorlike structure of the $K_{2,L}$ groups in detail.
A few remarks:
\begin{itemize}
\item
For simplicity we assume that $m\geq e$ and $q>2$.
\item
$\epsilon(n)$ is defined as $m+e-i$ where  $p^i\leq n<p^{i+1}$.
\item
$z\in B$ is defined as $T-1$, where $T$ is the generator of the cyclic group $G$.
\end{itemize}

$K_{2,L}(A\otimes B,A\otimes I)$ is a direct sum
$CF_a\oplus CF_b\oplus CF_2\oplus\dots\oplus CF_{q-1}$,
although $CF_a\oplus CF_b$ is called $CF_{\rm low}$ in proposition 4 of \cite{fcal}.
Here the summands are defined as follows:
\begin{equation*}
CF_a=\frac{\Omega_A}{p^{m+e}\Omega_A+p^e\delta A},\qquad
CF_b=\frac{A}{p^e A},\qquad
CF_n=\frac{\frac{\Omega_A}{p^{\epsilon(n)}\Omega_A}\oplus\frac{A}{p^eA}}{\{\delta a\oplus na\}}
\end{equation*}
The correspondence is as follows:
\begin{itemize}
\item
The class of $\alpha\in\Omega_A$ in $CF_a$ maps to the class of $z\otimes\alpha$
in $K_{2,L}(A\otimes B,A\otimes I)$.
\item
The class of $a\in A$ in $CF_b$ maps to  $z^{q-1}\otimes a\delta z$.
\item
The class of $\alpha\oplus a$ in $CF_n$ maps to
$z^n\otimes\alpha+z^{n-1}\otimes a\delta z$.
\end{itemize}
Similarly $K_{2,L}(A\otimes(B/I^2),A\otimes I)$ is a direct sum
$DF_1\oplus DF_2$.
Here the summands are defined as follows:
\begin{equation*}
DF_1=\frac{\Omega_A}{p^e\Omega_A},\qquad
DF_2=\frac{A}{p^eA+2A}
\end{equation*}
In particular $DF_2$ vanishes if $p>2$ and is $\frac{A}{2A}$ if $p=2$.
The correspondence is as follows:
\begin{itemize}
\item
The class of $\alpha\in\Omega_A$ in $DF_1$ maps to  $z\otimes\alpha$
in $K_{2,L}(A\otimes(B/I^2),A\otimes I)$.
\item
The class of $a\in A$ in $DF_2$ maps to  the class of $z\otimes a\delta z$.
\end{itemize}
The map $p_{GL}$ maps $CF_a$ in the obvious way to $DF_1$
and maps $CF_2$ in the obvious way to $DF_2$;
it vanishes on $CF_b$ and on the $CF_n$ with $n>2$.
The map $L_3(L_4)^{-1}$ maps $DF_1$ to itself by $1-\phi^p$
and maps $DF_2$ to itself by $1-\psi^p$.

\bigbreak

From the above enumeration we see the only nontrivial parts in
the determination of the tensorlike cover are the parts involving
$CF_a$ and $CF_2$, and the latter only if $p=2$.

\section{Determination of the covers.\label{cov}}

Motivated by the above analysis we define the following functors
on the category $\CALA(\ZZ[t],\sigma)$.
\begin{equation*}
CF_a(D\colon M\to N)=\frac{N}{p^{m+e}N+p^eD(M)}\;,\qquad
DF_1(D\colon M\to N)=\frac{N}{p^eN}
\end{equation*}
There is an obvious transformation $\pi\colon CF_a\to DF_1$.
We write $\chi\colon DF_1\to DF_1$ for the transformation given by multiplication by $1-t$.

We also define the following functor with the aim of showing that it is 
the fibre product of $\pi$ and $\chi$:
\begin{equation*}
EF_1(D\colon M\to N)=
\frac{N\oplus \frac{N}{p^mN+D(M)}}{\{p^e \gamma,-(1-t)\gamma\}}=
\frac{\frac{N}{p^{m+e}N}\oplus \frac{N}{p^mN+D(M)}}{\{p^e \gamma,-(1-t)\gamma\}}
\end{equation*}
\begin{proposition}
Let $\pi^*\colon EF_1\to DF_1$ be the projection on the first summand,
and let $\chi^*\colon EF_1\to CF_a$ be given by
$\chi^*(\alpha,\beta)=(1-t)\alpha+p^e\beta$.
Then the diagram of tensorlike functors
\begin{equation*}
\xymatrix{
EF_1\ar[r]^{\pi^*}\ar[d]^{\chi^*}&DF_1\ar[d]^\chi\\
CF_a\ar[r]^\pi&DF_1\\
}
\end{equation*}
is cartesian in the sense that the corresponding digram of right $S(\ZZ[t],\sigma)$-modules is cartesian.
\end{proposition}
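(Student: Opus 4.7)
The plan is to pass to right $S$-modules by evaluating each functor on the regular left $S(\ZZ[t],\sigma)$-module, and verify the pullback condition there. The first step is to identify the universal object: under the equivalence of proposition \ref{fm}, the left module $S$ itself corresponds to an object $D_P\colon M_P\to N_P^\sigma$ of $\CALA(\ZZ[t],\sigma)$, and writing every element of $S$ in the normal form $r_0+r_1u+r_2v$ with $r_i\in\ZZ[t]$ shows that $M_P=uS\cong\ZZ[t]$, $N_P=(1-u)S\cong\ZZ[t](1-u)\oplus\ZZ[t]v$, and $D_P(u)=v$. The crucial point is that $N_P$ is a free, in particular torsion-free, $\ZZ[t]$-module.

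Checking commutativity of the square is routine: both compositions send a class $(\alpha,\beta)$ of $EF_1$ to $(1-t)\alpha\bmod p^eN$ in $DF_1$. The canonical map to the pullback is
\[
\phi=(\chi^*,\pi^*)\colon EF_1(D_P)\longrightarrow CF_a(D_P)\times_{DF_1(D_P)}DF_1(D_P),
\]
and it is $S$-linear because $\chi^*$ and $\pi^*$ are natural transformations of tensorlike functors, hence commute with the $S$-action defined in the proof of proposition \ref{cover}. Surjectivity of $\phi$ is a short computation: given compatible lifts $\tilde a,\tilde b\in N_P$, the compatibility $\tilde a\equiv(1-t)\tilde b\pmod{p^eN_P}$ gives $\tilde c\in N_P$ with $\tilde a=(1-t)\tilde b+p^e\tilde c$, and then the class of $(\tilde b,\tilde c)$ in $EF_1(D_P)$ is a preimage.

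The main obstacle is injectivity, which is precisely where torsion-freeness of $N_P$ is consumed. If $\phi(\alpha,\beta)=0$, then $\pi^*(\alpha,\beta)=0$ lets us lift $\tilde\alpha=p^e\tilde\gamma_0$, and subtracting the defining relation $(p^e\tilde\gamma_0,-(1-t)\tilde\gamma_0)$ reduces the class of $(\alpha,\beta)$ to $(0,\beta')$. The vanishing of $\chi^*(0,\beta')=p^e\beta'$ in $CF_a$ then produces $x\in N_P$, $y\in M_P$ with $p^e(\tilde\beta'-p^mx-D_P(y))=0$ in $N_P$; freeness of $N_P$ cancels the $p^e$ and forces $\tilde\beta'\in p^mN_P+D_P(M_P)$, so $\beta'=0$. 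The fact that this step would fail for an arbitrary object with $p^e$-torsion in $N$ is precisely why the pointwise pullback is not tensorlike in general and why the functor $EF_1$ has been introduced.
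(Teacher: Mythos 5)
Your proof is correct and follows essentially the same route as the paper: reduce via Proposition \ref{cover} to evaluating the functors on the regular module $S(\ZZ[t],\sigma)$, identify the corresponding object of $\CALA(\ZZ[t],\sigma)$, and verify the pullback property there using that $N$ is a free (hence torsion-free) $\ZZ[t]$-module. The only difference is cosmetic: the paper splits the object into the two summands $(0,\ZZ[t])$ and $(\ZZ[t],\ZZ[t])$ and declares the resulting check ``elementary,'' whereas you carry out that surjectivity/injectivity computation explicitly and uniformly.
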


\begin{proof}
The functors $CF_a$ and $DF_1$ and $EF_1$ can be viewed as functors
on the category of left $S(\ZZ[t],\sigma)$-modules.
By proposition \ref{cover} we only must show that the diagram of abelian groups 
which one gets by applying those functors to $S=S(\ZZ[t],\sigma)$ itself is cartesian.
The diagram corresponding to $S$ is the direct sum of two diagrams:
one has $M=0$ and $N=\ZZ[t]$ and the other has $M=\ZZ[t]$ and $N=\ZZ[t]$.
We only check the second case: the first is similar but easier.
Thus we have to consider the following diagram:
\begin{equation*}
\xymatrix{
\frac{\ds \ZZ[t]\oplus\frac{\ZZ[t]}{p^m\ZZ[t]+\{g(pt)\}}}{\ds \{p^ef\oplus-(1-t)f\}}\ar[rrr]^{\pi^*}\ar[d]^{\chi^*}&&&
\frac{\ds\ZZ[t]}{\ds p^e\ZZ[t]}\ar[d]^\chi\\
\frac{\ds \ZZ[t]}{\ds p^{m+e}\ZZ[t]+p^e\{h(pt)\}}\ar[rrr]^\pi&&&
\frac{\ds\ZZ[t]}{\ds p^e\ZZ[t]}\\
}
\end{equation*}
To check that this diagram is cartesian is elementary.
\end{proof}

\bigbreak

Again motivated by the analysis in the last section
we define the following functors in the case $p=2$:
\begin{equation*}
CF_2(D\colon M\to N)=\frac{     \frac{N}{2^{m+e-1}N}\oplus\frac{M}{2^eM}    } {\{Da\oplus 2a\}},\qquad
DF_2(D\colon M\to N)=\frac{M}{2M}
\end{equation*}
There is an obvious transformation $\pi\colon CF_2\to DF_2$.
We write $\chi\colon DF_2\to DF_2$ for the transformation given by multiplication by $1-t$.

We also define the following functor with the aim of showing that it is 
the fibre product of $\pi$ and $\chi$:
\begin{equation*}
\begin{split}
EF_2(D\colon M\to N)
&=\frac{\frac{N}{2^{m+e-1}N}\oplus M\oplus\frac{M}{2^{e-1}M}}{\{D(c)\oplus 2h\oplus(c-(1-t)h)\}}\\
&=\frac{\frac{N}{2^{m+e-1}N}\oplus \frac{M}{2^eM}\oplus\frac{M}{2^{e-1}M}}{\{D(c)\oplus 2h\oplus(c-(1-t)h)\}}\\
\end{split}
\end{equation*}

\begin{proposition}
Let $\pi^*\colon EF_2\to DF_2$ be the projection on the second summand,
and let  $\chi^*\colon EF_2\to CF_2$ be given by
$\chi^*(\alpha\oplus a\oplus b)=\alpha\oplus((1-t)a+2b)$.
Then the diagram of tensorlike functors
\begin{equation*}
\xymatrix{
EF_2\ar[r]^{\pi^*}\ar[d]^{\chi^*}&DF_2\ar[d]^\chi\\
CF_2\ar[r]^\pi&DF_2\\
}
\end{equation*}
is cartesian in the sense that the corresponding digram of right $S(\ZZ[t],\sigma)$-modules is cartesian.
\end{proposition}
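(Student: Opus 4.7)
The proof will parallel the previous proposition. By Proposition \ref{cover}, since all three functors involved are tensorlike on left $S(\ZZ[t],\sigma)$-modules, it suffices to verify that the diagram is cartesian after evaluating on $S = S(\ZZ[t],\sigma)$ itself. Under the equivalence of Proposition \ref{fm}, this module corresponds to the $\CALA(\ZZ[t],\sigma)$-object with $M = uS$, $N = (1-u)S$, and $D(m) = vm$; unpacking the relations in $S$ shows it decomposes as a direct sum of a trivial piece $(M,N,D) = (0,\ZZ[t],0)$ and a principal piece $(M,N,D) = (\ZZ[t],\ZZ[t],\sigma)$, so it suffices to check each summand separately. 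The trivial piece is disposed of immediately: $DF_2 = 0$ and the remaining three corners all collapse to $\ZZ[t]/2^{m+e-1}\ZZ[t]$ with $\chi^*$ acting as the identity.

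For the principal piece the three functors become explicit quotients of $\ZZ[t]$-free modules, and I would verify the fibre product property by direct computation. For surjectivity onto the fibre product, the compatibility condition $\pi[\alpha,\beta] = \chi[a]$ translates to $\beta \equiv (1-t)a \pmod{2}$ in $\ZZ[t]$, so writing $\beta - (1-t)a = 2b$ gives an explicit preimage $[\alpha,a,b] \in EF_2$. For injectivity, given $[\alpha,a,b]$ killed by both $\chi^*$ and $\pi^*$, the vanishing of $\pi^*[\alpha,a,b]$ lets me arrange $a = 2h$ exactly, and the vanishing of $\chi^*[\alpha,a,b]$ in $CF_2$ then produces witnesses giving $c_0 = (1-t)h + b - 2^{e-1}\beta_1$ with $\alpha \equiv \sigma(c_0) \pmod{2^{m+e-1}}$; one application of the defining relation of $EF_2$ with parameters $(c,h) = (c_0, h)$ subtracts $(\sigma(c_0), 2h, b)$ from $[\alpha,a,b]$, leaving only a residue in the first summand that is divisible by $2^{m+e-1}$ and hence zero.

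The only subtlety is in the injectivity step: the defining relation of $EF_2$ couples all three coordinates via the two parameters $c$ and $h$, so the witnesses coming from the $\pi^*$- and $\chi^*$-vanishing must be chosen compatibly in order to be killed by a single application of the relation. The third summand $M/2^{e-1}M$ of $EF_2$ is precisely what is needed to absorb the discrepancy $2^{e-1}\beta_1$ between $c_0$ and $(1-t)h + b$; once this matching is set up the remainder is routine manipulation with $2$-power congruences, exactly analogous to the previous proposition.
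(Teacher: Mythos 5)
Your proposal takes essentially the same route as the paper: reduce via Proposition \ref{cover} to evaluating the diagram on $S(\ZZ[t],\sigma)$ itself, split off the summand with $M=0$, and check directly that the resulting diagram of quotients of $\ZZ[t]$ is cartesian. The paper dismisses that last step as ``elementary,'' and your explicit verification of it (in particular the use of the third summand $M/2^{e-1}M$ to absorb the $2^{e-1}\beta_1$ discrepancy in the injectivity argument) is correct.
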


\begin{proof}
As in the proof of the last proposition we only have to check the case $M=\ZZ[t]$ and $N=\ZZ[t]$.
Thus we have to consider the following diagram:
\begin{equation*}
\xymatrix{
\frac{\ds \frac{\ZZ[t]}{2^{m+e-1}\ZZ[t]}\oplus\ZZ[t]\oplus\frac{\ZZ[t]}{2^{e-1}\ZZ[t]}}{\ds \{c(pt)\oplus2h(t)\oplus(c(t)-(1-t)h(t))\}}
\ar[rrr]^{\pi^*}\ar[d]^{\chi^*}&&&
\frac{\ds\ZZ[t]}{\ds 2\ZZ[t]}\ar[d]^\chi\\
\frac{\ds \frac{\ZZ[t]}{2^{m+e-1}\ZZ[t]}\oplus\frac{\ZZ[t]}{2^e\ZZ[t]}}{\ds \{g(2t)\oplus2g(t)\}}\ar[rrr]^\pi&&&
\frac{\ds\ZZ[t]}{\ds 2\ZZ[t]}\\
}
\end{equation*}
To check that this diagram is cartesian is elementary.
\end{proof}

\begin{corollary}
The kernel of the surjective map from $EF_1(D\colon M\to N)$
to the fibre product of $CF_a(D\colon M\to N)$ and $DF_1(D\colon M\to N)$
is a quotient of the $p^e$-torsion in $N$.\\
The kernel of the surjective map from $EF_2(D\colon M\to N)$
to the fibre product of $CF_2(D\colon M\to N)$ and $DF_2(D\colon M\to N)$
is a quotient of the $2$-torsion in $M$.\\
\end{corollary}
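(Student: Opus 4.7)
The plan is to handle each of the two statements by tracking a kernel element down to a concrete representative, exploiting the defining relations of $EF_1$ and $EF_2$ to zero out all but one coordinate.

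For $EF_1$, start with a representative $(\alpha,\beta)\in EF_1(D\colon M\to N)$ whose image in the fibre product vanishes. Vanishing of $\pi^*$ means $\alpha\in p^eN$ modulo $p^{m+e}N$, so I may write $\alpha=p^e\gamma$ for some $\gamma\in N$. Now I apply the defining relation $(p^e\gamma,-(1-t)\gamma)=0$ of $EF_1$ to rewrite $(\alpha,\beta)=(0,\beta+(1-t)\gamma)$, and set $\beta'=\beta+(1-t)\gamma$. Vanishing of $\chi^*$ on this new representative says $p^e\beta'\in p^{m+e}N+p^eD(M)$, i.e.\ $p^e\beta'=p^{m+e}n+p^eD(m)$ for some $n\in N$, $m\in M$. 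Then $\eta:=\beta'-p^mn-D(m)$ is annihilated by $p^e$, and it is congruent to $\beta'$ modulo $p^mN+D(M)$, so the class in $EF_1$ equals $(0,\eta)$. This exhibits a surjection from the $p^e$-torsion subgroup $N[p^e]$ onto the kernel.

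For $EF_2$ I follow the same pattern but with two relations to apply in succession. Given $(\alpha,a,b)$ in the kernel, the condition $\pi^*=0$ gives $a=2h_0$ in $M/2^eM$; using the relation $(D(c),2h,c-(1-t)h)$ with $c=0,\,h=h_0$ I reduce to $(\alpha,0,b')$ where $b'=b+(1-t)h_0$. The condition $\chi^*=0$ then says $(\alpha,2b')\equiv(D(a_0),2a_0)$ in the appropriate quotient, for some $a_0\in M$. Applying the relation once more with $c=a_0,\,h=0$ knocks out both $\alpha$ (which becomes zero in $N/2^{m+e-1}N$) and replaces $b'$ by $b'-a_0$. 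A short inspection shows $b'-a_0$ is $2$-torsion in $M$ modulo $2^{e-1}M$, so the representative is $(0,0,\eta)$ with $\eta\in M[2]$, yielding a surjection $M[2]\twoheadrightarrow\ker$.

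The main obstacle is purely bookkeeping: I must verify at each relation-application that the $2^{m+e-1}$, $2^e$, $2^{e-1}$ indices line up correctly (using the second, cleaner presentation of $EF_2$) so that the witnesses $n$, $m$, $h_0$, $a_0$ can actually be lifted to $N$ or $M$, and that the resulting torsion class $\eta$ is independent of the lifts modulo the relations. Once these compatibilities are checked, both statements follow by simply collecting the representatives obtained above.
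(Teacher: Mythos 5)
Your proof is correct and is precisely the direct computation the paper leaves to the reader with the word ``Straightforward'': normalize a kernel element using the defining relations of $EF_1$ resp.\ $EF_2$ until only the last coordinate survives, and observe that this coordinate may be chosen in the $p^e$-torsion of $N$ resp.\ the $2$-torsion of $M$, so the kernel is the image of a homomorphism out of that torsion subgroup. The bookkeeping you flag does go through: from $p^e\beta'=p^{m+e}n+p^eD(m)$ one gets $\eta=\beta'-p^m n-D(m)$ with $p^e\eta=0$ and $\eta\equiv\beta'$ modulo $p^mN+D(M)$, and likewise $2(b'-a_0)\in 2^eM$ yields a $2$-torsion representative of $b'-a_0$ modulo $2^{e-1}M$, so nothing essential is missing.
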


\begin{proof}
Straightforward.
\end{proof}

Write $FP(A)$ for the fibre product of the bottom arrow and right arrow in diagram (\ref{maindia}).
Thus the old invariant can be seen as a map 
$K_2(A\otimes B,A\otimes I)\to FP(A)$.
Write $TC(A)$ for its tensorlike cover viewing $FP(A)$ as a functor 
of the diagram $\Delta(A)$ associated to $A$.
Thus the new invariant to be constructed can be seen as a map 
$K_2(A\otimes B,A\otimes I)\to TC(A)$.

Let us call a ring $A$ torsionfree if there is no $p$-torsion in $A$ nor in $\Omega_A$.
We have seen that the canonical map $TC(A)\to FP(A)$ is an isomorphism if $A$ is torsionfree.

\begin{corollary}
If $p>2$ then $TC(A)$ is isomorphic to 
\begin{equation*}
EF_1(A)\oplus CF_b(A)\oplus CF_2(A)\oplus CF_3(A)\oplus\dots CF_{q-1}(A).
\end{equation*}
If $p=2$ then $TC(A)$ is isomorphic to 
\begin{equation*}
EF_1(A)\oplus CF_b(A)\oplus EF_2(A)\oplus CF_3(A)\oplus\dots CF_{q-1}(A).
\end{equation*}
\end{corollary}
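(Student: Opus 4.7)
The plan is to reduce the determination of $TC(A)$ to the two preceding propositions by decomposing $FP(A)$ into a direct sum of smaller fibre products and then exploiting the fact that forming a tensorlike cover commutes with direct sums.

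First I would invoke the splittings recalled in the previous section. Combining the decomposition $K_{2,L}(A\otimes B,A\otimes I)=CF_a\oplus CF_b\oplus CF_2\oplus\dots\oplus CF_{q-1}$ and $K_{2,L}(A\otimes(B/I^2),A\otimes I)=DF_1\oplus DF_2$ with the description of $p_{GL}$ (which sends $CF_a$ to $DF_1$ by $\pi$, sends $CF_2$ to $DF_2$ by $\pi$, and vanishes on every other summand) and of $L_3L_4^{-1}$ (which acts summand-wise by $\chi$), the fibre product splits as
\begin{equation*}
FP(A)=FP_1(A)\oplus CF_b(A)\oplus FP_2(A)\oplus CF_3(A)\oplus\dots\oplus CF_{q-1}(A),
\end{equation*}
where $FP_1$ denotes the fibre product of $\pi\colon CF_a\to DF_1$ with $\chi\colon DF_1\to DF_1$, and $FP_2$ the analogous fibre product involving $CF_2$ and $DF_2$. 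When $p>2$ one has $DF_2=0$, so $FP_2=CF_2$.

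Next I would pass to tensorlike covers. By Proposition \ref{cover} the cover of a weakly additive functor $F$ is $F(S)\otimes_S(-)$ with $S=S(\ZZ[t],\sigma)$; this operation commutes with direct sums of functors, since both evaluation at $S$ and tensoring over $S$ do. Hence $TC(A)$ is the direct sum of the tensorlike covers of the summands above. The functors $CF_b$ and $CF_n$ for $n\geq 3$, as well as $CF_2$ when $p>2$, are built as cokernels of evident maps between direct sums of quotients of $M$ and $N$, hence are already tensorlike and coincide with their own covers. For the summands $FP_1$ and, when $p=2$, $FP_2$, the two propositions just proved identify the tensorlike covers as $EF_1$ and $EF_2$ respectively: each one shows that evaluating the displayed diagram at $S$ produces a cartesian square of right $S$-modules, which is precisely the condition for $EF_i(S)$ to be the right $S$-module pullback that computes the tensorlike cover of the fibre product functor. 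Assembling the pieces gives the two stated formulas.

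The delicate point, which motivates the whole paper, is that the tensorlike cover of a pullback is not the pullback of the covers in general; the preceding propositions bypass this by constructing $EF_1$ and $EF_2$ explicitly and verifying the cartesian property on the level of right $S$-modules. Once that is done, the proof of the corollary is bookkeeping with the direct-sum decomposition.
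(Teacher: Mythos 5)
Your proposal is correct and follows exactly the route the paper intends (the paper leaves this corollary unproved, having already remarked that "the only nontrivial parts in the determination of the tensorlike cover are the parts involving $CF_a$ and $CF_2$, and the latter only if $p=2$"): split $FP(A)$ along the direct-sum decompositions of the $K_{2,L}$ groups, use that the cover $F\mapsto F(S)\otimes_S(-)$ commutes with direct sums and is the identity on the already-tensorlike summands, and invoke the two propositions to identify the covers of the two nontrivial fibre products with $EF_1$ and $EF_2$.
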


\section{The invariant map and its inverse.\label{inv}}

Recall that there exists a $\lambda$-ring $U$ and an element $u\in U$ with the following property:
for any $\lambda$-ring $A$ and any $x\in A$ there is a unique 
$\lambda$-ring homomorphism $j_x\colon U\to A$ such that $j_x(u)=x$.
Similarly $U_d=U^{\otimes d}$ has the following property
for any $\lambda$-ring $A$ and any $x_1,\dots,x_d\in A$ there is a unique 
$\lambda$-ring homomorphism $j_{x_1,\dots,x_d}\colon U_d\to A$ 
such that $j_{x_1,\dots,x_d}(u_i)=x_i$ for $i=1,\dots,d$.
Here $u_i=1\otimes\dots\otimes u\otimes\dots\otimes1$.
The ring $U$ is a polynomial ring over $\ZZ$ in infinitely many variables;
in particular the $U_d$ are torsionfreee.

\bigbreak

The key to the construction of the new invariant map 
is the following principle:
\begin{proposition}
\label{invmap}
Let $F$ and $G$ be functors from the category of $\lambda$-rings
to the category of abelian groups.
Let $T\colon F\to G$ be a transformation defined on the full
subcategory consisting of the rings $U_d$.
Suppose that $F(A)$ is given by generators
and relations associated to sequences of elements in $A$.
Then $T$ can be extended to all $\lambda$-rings.
\end{proposition}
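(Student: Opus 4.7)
The plan is to use the universal property of the $U_d$ to propagate $T$ from the subcategory $\{U_d\}$ to every $\lambda$-ring $A$, by defining it on generators via pullback along $j_{\vec{x}}$ and verifying that relations are preserved. Concretely, suppose the presentation of $F(A)$ provides, for each arity $d$, a set of generator symbols and a set of relation symbols, each producing an element of $F(A)$ out of a sequence $\vec{x}=(x_1,\dots,x_d)\in A^d$ in a way compatible with $\lambda$-ring homomorphisms; in particular each generator $g_{\vec{x}}\in F(A)$ is the image of the corresponding ``universal'' generator $g_{\vec{u}}\in F(U_d)$ under $F(j_{\vec{x}})$, where $j_{\vec{x}}\colon U_d\to A$ sends $u_i\mapsto x_i$. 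The first step is simply to set
\begin{equation*}
T_A(g_{\vec{x}})\;:=\;G(j_{\vec{x}})\bigl(T_{U_d}(g_{\vec{u}})\bigr)
\end{equation*}
and extend $\ZZ$-linearly to the free abelian group on all generator symbols applied to sequences in $A$.

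The second step is to show that this descends to $F(A)$. A defining relation in $F(A)$ arises by plugging a sequence $\vec{y}\in A^{d'}$ into a universal relation, i.e.\ it is the image under $F(j_{\vec{y}})$ of a relation that already holds in $F(U_{d'})$. Hence the relation is zero in $F(U_{d'})$, so $T_{U_{d'}}$ sends it to $0$ in $G(U_{d'})$, and applying $G(j_{\vec{y}})$ still yields $0$ in $G(A)$. Thus relations are respected and $T_A$ is well defined on $F(A)$.

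The third step is naturality and compatibility with the given values. For a $\lambda$-ring homomorphism $f\colon A\to B$, the composite $f\circ j_{\vec{x}}$ is the unique $\lambda$-ring map sending $u_i$ to $f(x_i)$, hence equals $j_{f(\vec{x})}$; combined with $F(f)(g_{\vec{x}})=g_{f(\vec{x})}$, functoriality of $G$ gives $T_B\circ F(f)=G(f)\circ T_A$. Taking $A=U_d$ and $\vec{x}=\vec{u}$ one has $j_{\vec{u}}=\mathrm{id}$, so the new definition recovers the old $T_{U_d}$; and for morphisms between the $U_d$'s the naturality square is forced by the one we just established.

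The main obstacle is not any single calculation but the verification that the informal hypothesis ``$F(A)$ is given by generators and relations associated to sequences of elements in $A$'' really delivers the two compatibilities used above: that every generator in $F(A)$ is the $F$-image of a universal generator in some $F(U_d)$, and that every relation in $F(A)$ is the $F$-image of a universal relation in the same $F(U_d)$. In the applications of this paper ($F=EF_1$, $EF_2$, $CF_n$, etc.) this is visibly true because the presentations involve only $\ZZ$-linear combinations of terms of the form $\delta a$, $\psi^p a$, $\phi^p\alpha$, and the like, all of which are natural in the $\lambda$-ring, so enlarging $d$ once to accommodate all elements appearing in a given relation reduces the check to a diagram in $F(U_d)$ and $G(U_d)$ where $T$ is by hypothesis defined and natural.
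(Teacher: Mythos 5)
Your proposal is correct and follows essentially the same route as the paper: define $T_A$ on generators by pushing the universal value forward along $j_{\vec{x}}$, then kill relations by factoring every generator occurring in a relation through a single $U_{d'}$ and using the naturality of the given $T$ with respect to the morphisms $U_d\to U_{d'}$. The paper just carries this out concretely on a model relation $\langle a,bc\rangle=\langle ab,c\rangle+\langle ac,b\rangle$, making explicit the factorization $j_{a,bc}=j_{a,b,c}\circ j_{u_1,u_2u_3}$ that you defer to your closing paragraph.
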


\begin{proof}
To simplify the presentation of the proof
we consider the special case that $F(A)$
has a generator $\langle a,b\rangle$ for each $a,b\in A$,
and that there is only one type of relation,
say $\langle a,bc\rangle=\langle ab,c\rangle+\langle ac,b\rangle$ for each $a,b,c\in A$.
The general case would involve lots of extra indices, 
which would only obfuscate the idea of the proof.

In order to construct $T_A\colon F(A)\to G(A)$
we define it on generators by
$T_A\langle a,b\rangle =G(j_{a,b})T_{U_2}\langle u_1,u_2\rangle$.
We must then check that it maps the relations to zero.

The description in terms of generators and relations also applies to $F(U_3)$;
in particular $\langle u_1,u_2u_3\rangle=\langle u_1u_2,u_3\rangle+\langle u_1u_3,u_2\rangle$
in $F(U_3)$.
Now we have
\begin{equation*}
\begin{split}
T_A\langle a,bc\rangle
&=G(j_{a,bc}) T_{U_2}\langle u_1,u_2\rangle
=G(j_{a,b,c}\circ j_{u_1,u_2u_3}) T_{U_2}\langle u_1,u_2\rangle\\
&=G(j_{a,b,c}) G( j_{u_1,u_2u_3}) T_{U_2}\langle u_1,u_2\rangle\\
&=G(j_{a,b,c}) T_{U_3} F( j_{u_1,u_2u_3}) \langle u_1,u_2\rangle
=G(j_{a,b,c}) T_{U_3} \langle u_1,u_2u_3\rangle
\end{split}
\end{equation*}
and similarly
\begin{equation*}
\begin{split}
&T_A\langle ab,c\rangle
=G(j_{a,b,c}) T_{U_3} \langle u_1u_2,u_3\rangle\\
&T_A\langle ac,b\rangle
=G(j_{a,b,c}) T_{U_3} \langle u_1u_3,u_2\rangle\\
\end{split}
\end{equation*}
Thus the desired result follows by applying $G(j_{a,b,c})\circ T_{U_3}$
to the relation in $F(U_3)$.

We check that the extended  $T$ is still a transformation.
Let $f\colon A_1\to A_2$ be a $\lambda$-ring homomorphism.
For $a,b\in A_1$ we have
\begin{equation*}
\begin{split}
G(f) T_{A_1}\langle a,b\rangle
&=G(f) G(j_{a,b}) T_{U_2}\langle u_1,u_2\rangle
=G(j_{f(a),f(b)}) \langle u_1,u_2\rangle\\
&=T_{A_2}\langle f(a),f(b)\rangle
=T_{A_2} F(f)\langle a,b\rangle
\end{split}
\end{equation*}
since $f\circ j_{a,b}=j_{f(a),f(b)}$.
Since $G(f) T_{A_1}$ and $T_{A_2}F(f)$ agree on generators 
they agrre on $F_1(A)$.
\end{proof}

\begin{proposition}
\label{invinv}
Let $F$ and $G$ be functors from the category of $\lambda$-rings
to the category of abelian groups.
Suppose that $F$ and $G$ are both given by generators
and relations.
Let $T\colon F\to G$ and $S\colon G\to F$ be transformations
such that $TS=1$ and $ST=1$ on the rings $U_d$.
Then $TS=1$ and $ST=1$ on all $\lambda$-rings.
\end{proposition}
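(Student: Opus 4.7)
The plan is to reduce both identities on an arbitrary $\lambda$-ring $A$ to the corresponding identities on the universal rings $U_d$, using the same naturality mechanism that powered Proposition \ref{invmap}. The key observation is that $ST\colon F\to F$ and the identity $1_F\colon F\to F$ are both natural transformations, so it will suffice to compare them on the universal generators.

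First I would unpack the generator/relation description of $F$: for each sequence $a_1,\ldots,a_d\in A$ there is a generator $\langle a_1,\ldots,a_d\rangle\in F(A)$, and by functoriality this generator equals $F(j_{a_1,\ldots,a_d})\langle u_1,\ldots,u_d\rangle$. Then I would apply naturality of $ST$ along the $\lambda$-ring homomorphism $j_{a_1,\ldots,a_d}\colon U_d\to A$, obtaining
\begin{equation*}
(ST)_A\langle a_1,\ldots,a_d\rangle = F(j_{a_1,\ldots,a_d})\,(ST)_{U_d}\langle u_1,\ldots,u_d\rangle.
\end{equation*}
The hypothesis $(ST)_{U_d}=1$ forces the right-hand side to collapse to $F(j_{a_1,\ldots,a_d})\langle u_1,\ldots,u_d\rangle=\langle a_1,\ldots,a_d\rangle$, so $(ST)_A$ and the identity agree on a generating set of $F(A)$ and hence are equal as homomorphisms. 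Swapping the roles of $F,G$ and $S,T$ and running the same computation on generators $\langle b_1,\ldots,b_d\rangle\in G(A)$ yields $TS=1_G$ on every $\lambda$-ring.

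No serious obstacle is expected. The relations in the presentations of $F$ and $G$ never have to be revisited, because $ST$ and $1_F$ are already known to be well-defined group homomorphisms out of $F(A)$, so agreement on a spanning set is enough. The one conceptual point worth flagging is that, unlike in Proposition \ref{invmap}, we are not extending a transformation defined only on the $U_d$ to all $\lambda$-rings, but merely comparing two pre-existing natural transformations; this makes the argument genuinely easier than that of its predecessor, and essentially a corollary of the naturality step used there.
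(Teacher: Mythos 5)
Your proof is correct and takes essentially the same approach as the paper: both reduce the identity $(ST)_A=1$ to the universal case by writing each generator of $F(A)$ as $F(j_{a_1,\dots,a_d})\langle u_1,\dots,u_d\rangle$ and invoking naturality along $j_{a_1,\dots,a_d}$. The only cosmetic difference is that you apply naturality of the composite $ST$ in one step, whereas the paper first expands $T_{U_2}\langle u_1,u_2\rangle$ as an integral combination of generators of $G(U_2)$ and then uses naturality of $S$ term by term; the underlying mechanism is identical.
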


\begin{proof}
Again to simplify the proof we  assume that $F(A)$
has a generator $\langle a,b\rangle$ for each $ab\in A$,
and that $G(A)$ has a generator $\lfloor a,b\rfloor$ for each $a,b\in A$.
We make no assumptions an the form of the relations.
We will prove that $ST=1$ on $F(A)$.

The description in terms of generators and relations also applies to $G(U_2)$;
therefore $T_{U_2}\langle u_1,u_2\rangle$ can be expressed in these
generators, say
\begin{equation*}
T_{U_2}\langle u_1,u_2\rangle
=\sum_{i=1}^r c_i\lfloor v_i,w_i\rfloor,\qquad\text{with }c_i\in\ZZ
\end{equation*}
This assumption implies that
\begin{equation*}
T_A\langle a,b\rangle
=G(j_{a,b}) \sum_{i=1}^r c_i\lfloor v_i,w_i\rfloor
=\sum_{i=1}^r c_i\lfloor x_i,y_i\rfloor
\end{equation*}
where $x_i=j_{a,b}(v_i)$ and $y_i=j_{a,b}(w_i)$.
Therefore
\begin{equation*}
\begin{split}
S_AT_A\langle a,b\rangle
&=S_A\sum_{i=1}^r c_i \lfloor x_i,y_i\rfloor
=\sum_{i=1}^r c_i S_A G(j_{a,b}) \lfloor v_i,w_i\rfloor\\
&=\sum_{i=1}^r c_i  F(j_{a,b}) S_{U_2}\lfloor v_i,w_i\rfloor
=F(j_{a,b}) S_{U_2}\sum_{i=1}^r c_i  \lfloor v_i,w_i\rfloor\\
&=F(j_{a,b}) S_{U_2} T_{U_2}\langle u_1,u_2\rangle
=F(j_{a,b})\langle u_1,u_2\rangle=\langle a,b\rangle
\end{split}
\end{equation*}
Since $S_AT_A$ is the identity on generators,
it is the identity on $F(A)$.
\end{proof}

The aim is now to use proposition \ref{invmap} to construct 
our new invariant map from $K_2(A\otimes B,A\otimes I)$ to $TC(A)$,
and proposition \ref{invinv} to construct its inverse.
The point is of course that we already know 
what it should do on the rings $U_d$ and indeed on torsionfree $\lambda$-rings
since there its should agree with the old invariant map
under the isomorphism $TC(A)\to FP(A)$.
The only thing left to do is to point out why 
$K_2(A\otimes B,A\otimes I)$ and $TC(A)$
are described by generators and relations.

For $K_2$ this well known:
in \cite{ms} its is proved that for a commutative ring $W$ 
and a nilpotent ideal $J$ such that the projection $W\to W/J$ splits
the group $K_2(W,J)$ has a presentation
with as generators $\langle a,b\rangle$
with $a\in W$ and $b\in I$ or $a\in I$ and $b\in J$.
The relations are
\begin{itemize}
\item
$\langle a,b\rangle+\langle b,a\rangle$
for $b\in J$ or $a\in J$ .
\item
$\langle a,b\rangle+\langle a,c\rangle-\langle a,b+c-abc\rangle$
for $a\in J$ or and $b,c\in J$.
\item
$\langle a,bc\rangle-\langle ab,c\rangle-\langle ac,b\rangle$
for $a\in J$ or $b\in J$ or $c\in J$.
\end{itemize}
So for $W=A\otimes B$ and $J=A\otimes I$ we have generators
\begin{equation*}
\langle a_0+a_1z+a_2z^2+\dots+a_{q-1}z^{q-1},b_0+b_1z+b_2z^2+\dots+b_{q-1}z^{q-1}\rangle
\end{equation*}
with $a_i,b_i\in A$ and $a_0=0$ or $b_0=0$.
This can be seen as a generator depending on $2q-1$ elements of $A$.
The relations should of course be rewritten accordingly.
Also one should not forget the relation that two such generators agree
if their $a_i$ or their $b_i$ are equal module the order of $z^i$,
which is $\epsilon(i)$ for $i>0$.

The group $TC(A)$ is a direct sum of summands which each have
a description in terms of generators and relations.
For example $\Omega$ has a presentation with generators
$\lfloor a,b\rfloor$ for $a,b\in A$;
the relations are
\begin{itemize}
\item
$\lfloor a,b\rfloor+\lfloor a,c\rfloor-\lfloor a,b+c\rfloor$.
\item
$\lfloor a,c\rfloor+\lfloor b,c\rfloor-\lfloor a+b,c\rfloor$.
\item
$\lfloor a,bc\rfloor-\lfloor ab,c\rfloor-\lfloor ac,b\rfloor$.
\end{itemize}
The generator $\lfloor a,b\rfloor$ corresponds with the element $a\delta b$ of $\Omega_A$.
The image of $a\delta b$ under $\phi^p$ is 
\begin{equation*}
\psi^p(a)\phi^p(\delta b)=\psi^p(a)(b^{p-1}\delta b-\delta\theta^p(b))
\end{equation*}
Thus in terms of the above presentation $\phi^p$ maps
$\lfloor a,b\rfloor$ to \begin{equation*}
\lfloor \psi^p(a)b^{p-1},b\rfloor-\lfloor\psi^p(a),\theta^p(b)\rfloor
\end{equation*}
Here $\theta^p$ is the $\lambda$-operation introduced in \cite{flog}.
Using this remark it is now easy to write down presentations
of the groups $CF_n$, $CF_b$, $EF_1$ and $EF_2$.

\bigbreak

\end{document}